\numberwithin{equation}{section}
\theoremstyle{definition}
\newtheorem{thm}[equation]{Theorem} 
\theoremstyle{definition}
\newtheorem{cor}[equation]{Corollary}
\newtheorem{lemma}[equation]{Lemma}
\newtheorem{remark}[equation]{Remark}
\newcommand{\pair}[1]{\left\langle #1\right\rangle}
\newcommand{\bb}[1]{\mathbb{#1}} 
\newcommand{\pr}[1]{\left(#1\right)}
\newcommand\numberthis{\addtocounter{equation}{1}\tag{\theequation}}
\newcommand{\D}{\Delta}
\newcommand{\Hess}{\text{Hess}}
\newcommand{\n}{\nabla}
\newcommand{\dl}{\mathcal L}
\newcommand{\sbst}{\subseteq}
\newcommand{\h}{H}
\newcommand{\R}{\textbf{R}}
\newcommand{\bd}{\partial}
\newcommand{\Ric}{\text{Ric}}
\newcommand{\DL}{\mathcal L}
\newcommand{\M}{\mathcal M}
\title{Parabolic frequency for the mean curvature flow}
\author{Julius Baldauf}
\author{Tang-Kai Lee}
\address{MIT, Department of Mathematics\\
77 Massachusetts Avenue, Cambridge, MA 02139.}
\email{juliusbl@mit.edu and tangkai@mit.edu}
\begin{document}
\begin{abstract}
This paper defines a parabolic frequency for solutions of the heat equation along homothetically shrinking mean curvature flows and proves its monotonicity along such flows. 
As a corollary, frequency monotonicity provides a proof of backwards uniqueness. 
Additionally, for solutions of more general parabolic equations on mean curvature flow shrinkers, this paper provides bounds on the derivative of the frequency, which similarly imply backwards uniqueness.
\end{abstract}
	\maketitle

\section{{\bf Introduction}}

To measure the growth rate of a harmonic function $u$ near a point $p$ in $\R^n$, Almgren \cite{A} introduced the frequency function 
\begin{equation}
    N(r)
    =\frac{r\|\nabla u\|_{L^2(B_r(p))}^2}{\|u\|_{L^2(\partial B_r(p))}^2},
\end{equation}
and proved its monotonicity. 
The frequency is a local measure of the ``degree'' of $u$ as a polynomial-like function in the ball $B_r(p)$. In particular, a harmonic function which is homogeneous of degree $k$ has constant frequency equal to $k$. 

Since its introduction by Almgren, the elliptic frequency has been useful in the study of nodal and critical sets of solutions to elliptic and parabolic equations and to prove unique continuation \cites{GL1, GL2, HL, HHL, Li, Lo}. 
Additionally, Colding-Minicozzi used frequency monotonicity to prove finite dimensionality of the space of polynomial growth harmonic functions on manifolds with nonnegative Ricci curvature and Euclidean volume growth \cite{CM1}. 
More recently, Taubes defined and studied versions of the elliptic frequency in the context of gauge theory \cite{T}. 
For applications such as these on a general manifold, an extension of Almgren's monotonicity due to Garofalo-Lin \cite{GL1} is necessary: there exists a constant $\Lambda>0$ such that $e^{\Lambda r^2}N(r)$ is monotone increasing for sufficiently small $r$; see also \cite{M}.


The parabolic frequency generalizes Almgren's original elliptic frequency for harmonic functions. On static manifolds, the parabolic frequency was first defined by Poon \cite{P}. 
Poon's frequency monotonicity theorem holds on the restricted class of manifolds satisfying the assumptions of Hamilton's matrix Harnack inequality, namely, manifolds with non-negative sectional curvature and parallel Ricci curvature \cites{H1,H2}. 
Using the drift Laplacian, \cite{CM21} circumvented using Hamilton's matrix Harnack inequality and proved monotonicity of the parabolic frequency for arbitrary static manifolds. 

This paper defines and studies the notion of parabolic frequency in the mean curvature flow (MCF). 
Parabolic frequency functions have previously been defined and studied in the context of manifolds evolving by Ricci flow \cite{LW, CM20, BK}. 
On flat Euclidean space, which can naturally be considered both as a MCF and as a Ricci flow, the definitions of parabolic frequency for Ricci flow and MCF both agree with Poon's.

The frequency function in the setting of MCF is defined as follows. 
Let $\Phi$ be the backward heat kernel on $\R^N$, centered at the origin, and let $\M$ be the spacetime track of a shrinking MCF $M_t=\sqrt{-t}M$ in $\R^N$, for $t<0$.
With $\h$ and $A$ denoting the mean curvature and second fundamental form of $M_t$, respectively, assume the curvature bound $\pair{\h,A}\le \frac{\kappa}{-t}$ holds along the flow.
Such a bound holds if $M_t$ is compact or asymptotically conical, for example.
The parabolic frequency of a solution $u\colon \M\to\bb R$ to the heat equation along the flow with\footnote{Some growth assumption is necessary to rule out the classical Tychonoff example \cite[Chapter 7]{J}.} 
$u,\partial_t u\in W^{2,2}(\mu_t)$ is defined to be
\begin{equation}\label{eqn: frequency defn}
    U(t) 
    := -
    \frac{2\int_{M_t} |\n u|^2 \Phi dV_{g_t}} {\int_{M_t} u^2 \Phi dV_{g_t}}(-t)^{1+2\kappa}
    = 
    \frac{2\int_{M_t} u\dl_t u\cdot \Phi dV_{g_t}} {\int_{M_t} u^2 \Phi dV_{g_t}}(-t)^{1+2\kappa},
\end{equation}
where $\dl_t$
is the drift Laplacian determined by the backward heat kernel measure.
The power involving $\kappa$ is a necessary correction term depending on the geometry of the flow via the aforementioned curvature bound; 
it is the parabolic analogue of the error term  $e^{\Lambda r^2}$ appearing in the above elliptic frequency.
By further analogy with the elliptic case, a caloric polynomial of degree $k$ has $U(t)\equiv -k$; see \cite{BK}.

\begin{thm}[Frequency monotonicity]\label{main}
 The frequency $U(t)$ is increasing, and $U'(t)=0$ only if $u$ is an eigenfunction of $\DL_t$ satisfying $\DL_tu=c(t)u$, where $c(t)=\frac{U(t)}{2(-t)^{1+2\kappa}}$.
\end{thm}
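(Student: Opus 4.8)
The plan is to differentiate the numerator and the denominator of $U$ separately, using the heat equation together with a Huisken-type first variation formula, and then to combine the results with the Cauchy--Schwarz inequality. Set
\[
I(t)=\int_{M_t}u^2\,\Phi\,dV_{g_t},\qquad D(t)=\int_{M_t}\abs{\n u}^2\,\Phi\,dV_{g_t},\qquad K(t)=\int_{M_t}(\DL_t u)^2\,\Phi\,dV_{g_t};
\]
integrating by parts against the weight, $D(t)=-\int_{M_t}u\,\DL_t u\,\Phi\,dV_{g_t}$, so $U=-2(-t)^{1+2\kappa}D/I\le 0$ and $\log(-U)=\log 2+(1+2\kappa)\log(-t)+\log D-\log I$. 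Since $U\le 0$, it suffices to prove $\frac{D'}{D}-\frac{I'}{I}\le\frac{1+2\kappa}{-t}$, which says $\frac{d}{dt}\log(-U)\le 0$ and hence $U'=U\cdot\frac{d}{dt}\log(-U)\ge 0$. We record for later that $D^2=\bigl(\int_{M_t}u\,\DL_t u\,\Phi\,dV_{g_t}\bigr)^2\le IK$ by Cauchy--Schwarz, with equality exactly when $\DL_t u$ is pointwise proportional to $u$.

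The first ingredient is the first variation formula along a self-shrinking flow: for $\phi$ sufficiently regular on $\M$,
\[
\frac{d}{dt}\int_{M_t}\phi\,\Phi\,dV_{g_t}=\int_{M_t}\bigl(\bd_t\phi-\DL_t\phi\bigr)\,\Phi\,dV_{g_t}.
\]
This is Huisken's monotonicity formula evaluated on a test function: the general version carries an additional defect term proportional to $\bigl|\h+\tfrac{x^\perp}{2(-t)}\bigr|^2$ ($x^\perp$ the normal part of the position vector), which vanishes identically because $M_t$ satisfies the shrinker equation. Taking $\phi=u^2$ and using the heat equation $\bd_t u=\DL_t u$ gives $\bd_t(u^2)-\DL_t(u^2)=2u\,\DL_t u-2u\,\DL_t u-2\abs{\n u}^2=-2\abs{\n u}^2$, so $I'(t)=-2D(t)$.

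Applying the same formula with $\phi=-u\,\DL_t u$, using $\bd_t u=\DL_t u$, and integrating by parts once against $\Phi\,dV_{g_t}$ yields
\[
D'(t)=-2K(t)-\int_{M_t}u\,[\bd_t,\DL_t]\,u\,\Phi\,dV_{g_t},
\]
where $[\bd_t,\DL_t]$ is the commutator of $\bd_t$ with the time-dependent drift Laplacian. This commutator is controlled by the deformation of the induced metric and of the weight $\Phi$ along the flow; evaluating it with the help of the self-shrinker structure and the curvature bound $\pair{\h,A}\le\frac{\kappa}{-t}$ should give $-\int_{M_t}u\,[\bd_t,\DL_t]\,u\,\Phi\,dV_{g_t}\le\frac{1+2\kappa}{-t}D(t)$, hence
\[
D'(t)\le -2K(t)+\frac{1+2\kappa}{-t}\,D(t).
\]
I expect this step --- pinning down the commutator term precisely enough to extract exactly the coefficient $1+2\kappa$, the ``$1$'' arising from the self-similar scaling and the ``$2\kappa$'' from the curvature hypothesis --- to be the main technical obstacle.

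It then remains to assemble the pieces. From $I'=-2D$ and the bound on $D'$,
\[
\frac{D'}{D}-\frac{I'}{I}\le\frac{1+2\kappa}{-t}-\frac{2K}{D}+\frac{2D}{I}=\frac{1+2\kappa}{-t}+\frac{2(D^2-IK)}{DI}\le\frac{1+2\kappa}{-t},
\]
the final inequality being $D^2\le IK$; therefore $U'(t)\ge 0$. If $U'(t)=0$, then every inequality in this chain must be an equality; in particular $D^2=IK$, so by the equality case of Cauchy--Schwarz $\DL_t u=c(t)u$ pointwise for some function $c(t)$, and computing the proportionality factor gives $c(t)=\frac{\int_{M_t}u\,\DL_t u\,\Phi\,dV_{g_t}}{\int_{M_t}u^2\,\Phi\,dV_{g_t}}=-\frac{D(t)}{I(t)}=\frac{U(t)}{2(-t)^{1+2\kappa}}$, as claimed. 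Finally, the regularity assumptions $u,\bd_t u\in W^{2,2}(\mu_t)$ and the curvature bound $\pair{\h,A}\le\frac{\kappa}{-t}$ are precisely what justify differentiating under the integral sign and the vanishing of boundary terms at spatial infinity, both of which are automatic when $M_t$ is compact.
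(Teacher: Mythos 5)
There are two genuine problems. First, your ``first ingredient'' is false as stated: the weighted transport formula along the flow (Theorem \ref{mono}, due to Huisken/Ecker) has the ordinary Laplacian, $\frac{d}{dt}\int_{M_t}\phi\,d\mu_t=\int_{M_t}\bigl((\bd_t-\D)\phi-|\h-\frac{x^\perp}{2t}|^2\phi\bigr)d\mu_t$, not the drift Laplacian $\DL_t$. Since $\int_{M_t}\DL_t\phi\,d\mu_t=0$ by the weighted divergence theorem, your version would assert $\frac{d}{dt}\int_{M_t}\phi\,d\mu_t=\int_{M_t}\bd_t\phi\,d\mu_t$, which already fails on the static plane $M_t=\bb R^n$ with $\phi=|x|^2$ (the left side is $-2n$, the right side is $0$). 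Relatedly, the theorem concerns solutions of $(\bd_t-\D)u=0$, not of $\bd_t u=\DL_t u$; in the paper the drift Laplacian enters only through the integration-by-parts identity $\int_{M_t} u\,\DL_t u\,d\mu_t=-\int_{M_t}|\n u|^2d\mu_t$ defining $D$. Your two substitutions happen to reproduce the correct identity $I'=-2\int_{M_t}|\n u|^2d\mu_t$, but the derivation is not valid and the equation you impose on $u$ is not the hypothesis of the statement.

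Second, and more importantly, the differential inequality $D'(t)\le-2K(t)+\frac{1+2\kappa}{-t}D(t)$ is precisely the core of the proof, and you leave it as an expectation (``should give'', ``main technical obstacle''). This is where all the work lies: one computes $\bd_t|\n u|^2$ from the metric evolution $\bd_t g^{ij}=2g^{ik}g^{jl}\pair{\h,A_{kl}}$, applies the Bochner formula to get $(\bd_t-\D)|\n u|^2=2\pair{\h,A(\n u,\n u)}-2|\Hess_u|^2-2\Ric(\n u,\n u)$, and then uses the drift Bochner identity (Lemma \ref{driftB}) to convert $\int_{M_t}\bigl(|\Hess_u|^2+\Ric(\n u,\n u)\bigr)d\mu_t$ into $\int_{M_t}\bigl((\DL_t u)^2-\frac1{-2t}|\n u|^2\bigr)d\mu_t$; the curvature hypothesis $\pair{\h,A}\le\frac{\kappa}{-t}$ supplies the $2\kappa$ and the Hessian of the weight $-\frac{|x|^2}{4t}$ supplies the $1$ in the coefficient $1+2\kappa$. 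None of this appears in your proposal, so the central estimate is unproven. Your final assembly (logarithmic derivative of $U$, Cauchy--Schwarz $D^2\le IK$, and the equality case giving $\DL_t u=c(t)u$ with $c(t)=\frac{U(t)}{2(-t)^{1+2\kappa}}$) does match the paper's argument in substance and would be fine once the two points above are repaired; note only that the $\log(-U)$ manipulation tacitly assumes $D>0$, whereas the paper's direct quotient-rule computation avoids this.
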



An important step in the proof of this theorem is the monotonicity formula for the weighted $L^2$-norm of solutions to the heat equation along MCF. The latter formula was used by Colding-Minicozzi \cites{CM20} to bound the codimension of a general MCF in terms of its entropy.
Frequency monotonicity can be applied to give a simple proof of backwards uniqueness.

\begin{cor}[Backward uniqueness]\label{bunique}
	If $a<b$ and $u(\cdot,b)\equiv 0,$ then $u(\cdot,t)\equiv 0$ for all $t\in[a,b]$.
\end{cor}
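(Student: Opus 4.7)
My strategy is to argue by contradiction using the differential identity
\[
    \frac{d}{dt}\log I(t) \;=\; \frac{U(t)}{(-t)^{1+2\kappa}},
    \qquad I(t):=\int_{M_t}u^2\,\Phi\,dV_{g_t},
\]
coupled with the monotonicity of $U$ from Theorem~\ref{main}. Suppose for contradiction that $u(\cdot,b)\equiv 0$ but $u(\cdot,t^*)\not\equiv 0$ for some $t^*\in[a,b)$. Since $t\mapsto I(t)$ is continuous (by the $W^{2,2}$ hypothesis on $u$ and $\partial_t u$), the set $\{t:I(t)>0\}$ is open in $[a,b]$, so one can choose $t_1<t_0\le b$ with $I>0$ on $[t_1,t_0)$ and $I(t_0)=0$; in particular $\log I(t)\to-\infty$ as $t\nearrow t_0$.

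The displayed identity comes from the monotonicity formula for the weighted $L^2$-norm that underlies Theorem~\ref{main}. Using $\partial_t u=\DL_t u$ together with integration by parts against the Gaussian weight (which yields $\int u\,\DL_t u\,\Phi\,dV = -\int|\nabla u|^2\Phi\,dV$), and the vanishing of the shrinker error term $H-x^\perp/(2(-t))$ in Huisken's monotonicity formula, one obtains $I'(t) = -2\int_{M_t}|\nabla u|^2\Phi\,dV_{g_t}$. Dividing by $I(t)$ and comparing with \eqref{eqn: frequency defn} yields the identity.

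Now I apply the monotonicity: $U(s)\ge U(t_1)$ for every $s\in[t_1,t_0)$, and $U(t_1)$ is finite because $u(\cdot,t_1)\not\equiv 0$ and $u\in W^{2,2}(\mu_{t_1})$ ensures $\int|\nabla u|^2\Phi\,dV_{g_{t_1}}<\infty$. Integrating,
\[
    \log I(t) \;\ge\; \log I(t_1) \;+\; U(t_1)\int_{t_1}^{t}(-s)^{-1-2\kappa}\,ds,
\]
and since $t_0\le b<0$, the integrand stays bounded on $[t_1,t_0]$, so the right-hand side remains bounded as $t\nearrow t_0$. This contradicts $\log I(t)\to-\infty$, so no such $t^*$ exists and $u\equiv 0$ on $[a,b]$.

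The main obstacle I expect is the rigorous derivation of $I'(t)=-2\int|\nabla u|^2\Phi\,dV_{g_t}$: justifying differentiation under the integral and the drift integration-by-parts under only $W^{2,2}$ regularity requires care, particularly in controlling the Gaussian weight near infinity on possibly noncompact $M_t$. However, this is precisely the computation that drives the proof of Theorem~\ref{main}, so it is available for free; the rest of the corollary is a one-line integration of the monotonicity inequality.
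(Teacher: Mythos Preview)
Your argument is correct and is exactly the paper's approach: integrate the identity $(\log I)'(t)=(-t)^{-1-2\kappa}U(t)$ and use the monotonicity $U(t)\ge U(t_1)$ to obtain a Harnack-type lower bound on $I$, which is incompatible with $I(t_0)=0$ unless $I\equiv 0$. One small slip in your sketch: the heat equation along the flow is $\partial_t u=\Delta u$, not $\partial_t u=\DL_t u$, but since you ultimately invoke the computation $I'(t)=D(t)$ from the proof of Theorem~\ref{main} (via Huisken's weighted monotonicity with vanishing shrinker defect), this does not affect the argument.
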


For solutions of more general parabolic equations along a MCF, though the frequency need not be monotone, its derivative can be bounded suitably to imply backwards uniqueness.

\begin{thm}[General backward uniqueness]\label{gmain}
 	Let $a<b$ and $u\colon \mathcal{M} \to \bb R$ satisfy $|(\bd_t-\D)u|\le C(t)\pr{|\n u|+|u|}$ with $\int_a^b C(t)^2 dt<\infty.$ If $u(\cdot,b)=0,$ then $u(\cdot,t)=0$ for all $t\in[a,b].$
\end{thm}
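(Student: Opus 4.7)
The plan is to adapt the frequency-monotonicity computation of Theorem~\ref{main} to the inhomogeneous setting, derive a Gronwall-type differential inequality for $U(t)$, and then use the resulting control on $\log\int_{M_t}u^2\Phi\,dV_{g_t}$ to rule out finite-time vanishing unless $u\equiv 0$ throughout. Set $I(t):=\int_{M_t}u^2\Phi\,dV_{g_t}$, $D(t):=\int_{M_t}|\n u|^2\Phi\,dV_{g_t}$, and $\phi:=(\bd_t-\D)u$, so that $|\phi|\le C(t)(|\n u|+|u|)$. Assume for contradiction that $u\not\equiv 0$ on $[a,b]$; since $u(\cdot,b)=0$ and $I$ is continuous, there exist $t_1<t_2\le b$ with $I>0$ on $[t_1,t_2)$ and $I(t_2)=0$, so that $U(t)$ and $\log I(t)$ are well-defined on $[t_1,t_2)$.

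The key step is to rerun the proof of Theorem~\ref{main} while keeping $\phi$ on the right-hand side. In the exact case, that proof expresses $U'(t)$ as a non-negative ``variance'' quantity, vanishing precisely in the eigenfunction case $\DL_tu=c(t)u$. With $\phi\neq 0$, the same computation produces two additional linear terms of the form $\int u\phi\,\Phi\,dV_{g_t}$ and $\int (\DL_tu-c(t)u)\phi\,\Phi\,dV_{g_t}$. Applying Cauchy--Schwarz with weight $\Phi\,dV_{g_t}$ together with the pointwise bound on $\phi$, the first is controlled by $C(t)(I^{1/2}D^{1/2}+I)$, and the second can be absorbed into the non-negative variance term at the cost of a contribution $\lesssim C(t)^2(D+I)$. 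Dividing by $I$ and using $U\le 0$ produces a differential inequality
\[
U'(t)\ \ge\ -K\,C(t)^2\bigl(1+|U(t)|\bigr)\ -\ K\,C(t)\bigl(1+\sqrt{|U(t)|}\bigr)
\]
on $[t_1,t_2)$, where $K$ depends on $\kappa$ and on $[a,b]\subset(-\infty,0)$ through the bounded factor $(-t)^{1+2\kappa}$.

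Since $U(t)\le 0$ by construction and $\int_a^b C(t)^2\,dt<\infty$, Gronwall's inequality applied to this differential inequality yields a uniform lower bound $U(t)\ge -M$ on $[t_1,t_2)$ for some finite $M$. A parallel computation from the proof of Theorem~\ref{main} gives $\tfrac{d}{dt}\log I(t)$ as an expression of the form $U(t)/(-t)^{1+2\kappa}$ plus an error of order $C(t)(1+\sqrt{|U(t)|})$; by the $U$-bound and Cauchy--Schwarz in time against $\int_a^b C^2\,dt<\infty$, this is integrable on $[t_1,t_2)$. Integrating then yields $\log I(t_2)\ge\log I(t_1)-M'$ for some finite $M'$, contradicting $I(t_2)=0$.

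The main technical obstacle is the middle paragraph: cleanly extracting the $\phi$-contributions from the frequency-monotonicity computation and verifying that the resulting Cauchy--Schwarz error terms can be absorbed into the variance term with only an error of the stated form. Once the differential inequality is established, the Gronwall step and the passage to a contradiction are routine.
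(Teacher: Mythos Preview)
Your proposal is correct and follows essentially the same route as the paper: bound $U'$ from below by re-running the monotonicity computation with the inhomogeneity $\phi=(\bd_t-\D)u$ retained, use a Gronwall argument to get a uniform lower bound on $U$, feed this into the formula for $(\log I)'$, and integrate to obtain a Harnack-type lower bound on $I$ that contradicts $I(t_2)=0$.

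The only substantive difference is organizational. The paper avoids the mixed $C(t)(1+\sqrt{|U|})$ error term you anticipate by rewriting both $D$ and $I'$ in terms of the ``polarized'' quantity $2\DL_t u+(\bd_t-\D)u$; this makes $I'D$ a difference of two squares and, after H\"older, yields the clean bound $I^2U'\ge -(-t)^{1+2\kappa}I\int\phi^2\,d\mu_t$, hence $U'\ge C(t)^2\bigl(U-2(-t)^{1+2\kappa}\bigr)$ with no linear-in-$C$ term. In fact your own decomposition gives the same thing: once you write the cross term as $4I\int\phi(\DL_t u - c(t)u)\,d\mu_t$ with $c(t)=D/(2I)$ (the other pieces cancel exactly, since $4Ic=2D$), the pointwise inequality $4ab\ge -4a^2-b^2$ absorbs it into the variance at the cost of only $I\int\phi^2$. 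So the extra $C(t)\sqrt{|U|}$ terms you worried about never appear, though your argument would go through even if they did, since $\int_a^b C\le (b-a)^{1/2}(\int_a^b C^2)^{1/2}<\infty$.
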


\begin{remark}[General type-I MCF]
The methods developed here can be extended to general MCFs with bounded curvature, such as type-I flows.
This extension requires a positive solution to the equation $\pr{\bd_t+\D-|\h|^2}K=0$, replacing the backward heat kernel \eqref{eq:phi}.
Such solutions exist; see \cite[Theorem 24.40]{RF}.
With this modification, the calculations to follow carry through as long as $\Hess_{\log K}$ is bounded, which is always the case if the flow is compact.
In general, the Hessian of the kernel can be bounded using the methods developed in \cites{HZ,H21}.
\end{remark}

The paper is organized as follows: Section \ref{sec: freq monotonicity} proves Theorem \ref{main} and Corollary \ref{bunique}; Section \ref{sec: general operator} proves Theorem \ref{gmain}; Section \ref{sec: consequences} discusses additional consequences and examples.

\subsection*{\bf Acknowledgements}
The authors are indebted to William Minicozzi for continual guidance and support and were partially funded by the NSF GRFP and NSF DMS Grant 2005345.

\section{\bf Frequency Monotonicity}\label{sec: freq monotonicity}
Let $M$ be an $n$-dimensional self-shrinker in $\bb R^{N},$ and $M_t^n$ ($t\in (-\infty,0]$) be the MCF induced by it. 
That is, $M$ satisfies $\h = -\frac{x^\perp}{2}$ where $x^\perp$ is the normal component of the position vector. 
By a change-of-variable argument, its rescaling $M_t:=\sqrt{-t}M$ satisfies the MCF equation $\bd_t x = \h(x,t).$
Assume $\pair{\h,A}\le \kappa$ on $M$ for some $\kappa>0,$ which, by scaling, implies
\begin{equation}\label{HAbound}
\pair{\h,A}\le \frac{\kappa}{-t}
\end{equation}
on $M_t$ for any $t<0.$

To define the energy and the frequency for functions along $M_t,$ let $\Phi$ be the $n$-dimensional backward heat kernel. 
That is,
\begin{equation}\label{eq:phi}
    \Phi(x,t):=
\frac 1{(-4\pi t)^{n/2}} e^{\frac{|x|^2}{4t}}
\end{equation}
for $(x,t)\in\bb R^N\times (-\infty,0].$
Using this, consider the measure
$$d\mu_t(x) := \Phi(x,t) dV_{g_t}(x),$$
where $dV_{g_t}$ is the induced volume form on $M_t.$  
We let $\M_{[a,b]}:=\bigcup_{a\le t\le b} (M_t\times\{t\})$ be the spacetime track of the flow on the time interval $[a,b]\sbst (-\infty,0).$ 
Let $u\colon \M_{[a,b]}\to\bb R$ be a $C^2$ function such that $u,\bd_t u\in W^{2,2}(\mu_t)$ when restricted to each time slice. Given such $u,$ define
\begin{equation*}
I(t):=
\int_{M_t} u^2 d\mu_t,
\end{equation*} 
\begin{equation*}
D(t):=
-2\int_{M_t} |\n u|^2 d\mu_t
= 2\int_{M_t}u\dl_t u\cdot d\mu_t,
\end{equation*} 
and
\begin{equation*}
U(t) := (-t)^{1+2\kappa}\frac{D(t)}{I(t)}
\end{equation*}
where $\dl_t:=e^{-\frac{|x|^2}{4t}}\text{div}\pr{e^{\frac{|x|^2}{4t}}\n(\cdot)}$ is the drift Laplacian determined by the measure $d\mu_t.$

The first result that we will keep using when calculating the derivatives of $I$ and $D$ is the following weighted monotonicity formula, which holds for general MCF in any dimension and codimension, and was first proven in \cite{H90} and \cite{E}.

\begin{thm}[{\cite[Theorem 4.13]{E}}]\label{mono}
	Suppose $f$ is a smooth function on $\M_{[a,b]}$ with $f,\bd_t f\in W^{2,2}(\mu_t)$ when restricted to each time slice. 
	Then
	$$\frac{d}{dt}
	\int_{M_t} fd\mu_t
	= \int_{M_t} \left((\bd_t-\D)f - 
	\left|\h-\frac{x^\perp}{2t}\right|^2f
	\right)d\mu_t.
	$$
\end{thm}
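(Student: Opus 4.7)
The plan is to differentiate $\int_{M_t}f\,d\mu_t$ under the integral sign, use a pointwise Huisken-type identity for the backward heat kernel $\Phi$, and then integrate by parts to transfer $\D_{M_t}$ from $\Phi$ onto $f$.

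First, parametrize the flow by $F\colon M\times[a,b]\to\bb R^N$ with $\bd_t F=\h$. Since $\bd_t\, dV_{g_t}=-|\h|^2\, dV_{g_t}$, differentiation under the integral sign gives
\[
\frac{d}{dt}\int_{M_t}f\Phi\, dV_{g_t}=\int_{M_t}\left[(\bd_t f)\Phi+f\pr{\bd_t\Phi+\langle\n^{\bb R^N}\Phi,\h\rangle}-|\h|^2 f\Phi\right]dV_{g_t},
\]
where $\bd_t f$ and $\bd_t\Phi+\langle\n^{\bb R^N}\Phi,\h\rangle$ denote material derivatives along flow trajectories. Second, establish the pointwise identity
\[
\bd_t\Phi+\langle\n^{\bb R^N}\Phi,\h\rangle-|\h|^2\Phi+\D_{M_t}\Phi=-\left|\h-\tfrac{x^\perp}{2t}\right|^2\Phi
\]
by direct computation. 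The ingredients are $\n^{\bb R^N}\Phi=\tfrac{x}{2t}\Phi$, $\bd_t\Phi=\pr{-\tfrac{n}{2t}-\tfrac{|x|^2}{4t^2}}\Phi$, and the submanifold formula $\D_{M_t}\Phi=\tr_{TM_t}(\Hess_{\bb R^N}\Phi)+\langle\n^{\bb R^N}\Phi,\h\rangle=\tfrac{n}{2t}\Phi+\tfrac{|x^T|^2}{4t^2}\Phi+\tfrac{\langle x^\perp,\h\rangle}{2t}\Phi$. The $\tfrac{n}{2t}$ pieces cancel, the identity $-\tfrac{|x|^2}{4t^2}+\tfrac{|x^T|^2}{4t^2}=-\tfrac{|x^\perp|^2}{4t^2}$ produces the purely normal contribution, and completing the square against $|\h|^2$ and $\tfrac{\langle\h,x^\perp\rangle}{t}$ yields the perfect square on the right.

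Third, multiply the pointwise identity by $f$, integrate over $M_t$, and apply the symmetric integration by parts $\int_{M_t}f\,\D_{M_t}\Phi\, dV_{g_t}=\int_{M_t}(\D_{M_t}f)\Phi\, dV_{g_t}$; boundary contributions vanish because $f,\bd_t f\in W^{2,2}(\mu_t)$ and $\Phi$ decays rapidly. Rearranging gives the desired formula
\[
\frac{d}{dt}\int_{M_t}f\, d\mu_t=\int_{M_t}\left[(\bd_t-\D)f-\left|\h-\tfrac{x^\perp}{2t}\right|^2 f\right]d\mu_t.
\]
The only genuinely delicate step is the algebraic reassembly in the pointwise identity—tracking the cancellation between ambient and tangential Hessian contributions, isolating $|x^\perp|^2$ from $|x|^2$, and completing the square—while the remaining calculations are routine differentiation and integration by parts.
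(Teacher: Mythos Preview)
The paper does not supply its own proof of this theorem; it is quoted as a known result from Ecker \cite[Theorem 4.13]{E} (originally Huisken \cite{H90}), so there is nothing in the paper to compare against. Your argument is correct and is precisely the standard proof: differentiate under the integral using $\bd_t\,dV_{g_t}=-|\h|^2\,dV_{g_t}$, verify the pointwise Huisken identity for $\Phi$ by the explicit calculation you outline (the two copies of $\tfrac{\langle x^\perp,\h\rangle}{2t}$, one from $\langle\n^{\bb R^N}\Phi,\h\rangle$ and one from $\D_{M_t}\Phi$, combine to give the cross term in the square), and then integrate $\D_{M_t}$ by parts, which is justified by the Gaussian decay of $\Phi$ and the $W^{2,2}(\mu_t)$ hypothesis.
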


The second calculation that we will need is the following lemma.

\begin{lemma}\label{driftB}
	If $u\in W^{2,2}(\mu_t)$ when restricted to each time slice, then
	$$\int_{M_t} \pr{|\Hess_u|^2 +  \Ric(\n u,\n u)} d\mu_t
	= \int_{M_t} \pr{
	(\dl_t u)^2
	- \frac 1{-2t}|\n u|^2
	}d\mu_t.
	$$
\end{lemma}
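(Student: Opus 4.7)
The identity has the flavor of a weighted integrated Bochner formula, so the natural strategy is to start from the pointwise Bochner identity on $(M_t,g_t)$,
$$\tfrac{1}{2}\Delta |\nabla u|^2 = |\Hess u|^2 + \langle \nabla u, \nabla \Delta u\rangle + \Ric(\nabla u, \nabla u),$$
and integrate against $d\mu_t = \Phi \, dV_{g_t}$, converting the Laplacians into drift Laplacians $\dl_t$ along the way.

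After writing $\Delta = \dl_t - \tfrac{1}{2t}\langle x^T, \nabla \cdot\rangle$ and rearranging, I would apply two integration-by-parts identities that follow from the self-adjointness of $\dl_t$ with respect to $d\mu_t$: namely, $\int_{M_t}\dl_t|\nabla u|^2 \, d\mu_t = 0$ and $\int_{M_t}\langle \nabla u, \nabla \dl_t u\rangle \, d\mu_t = -\int_{M_t}(\dl_t u)^2 \, d\mu_t$. Both can also be read off directly from Theorem \ref{mono}, whose error term $|\h - x^\perp/(2t)|^2$ vanishes identically on the self-shrinker, leaving $\frac{d}{dt}\int_{M_t} f \, d\mu_t = \int_{M_t}(\bd_t - \Delta)f \, d\mu_t$ for any suitable test function $f$. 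Together these reduce the integrated Bochner identity to a statement relating $\int (\dl_t u)^2 \, d\mu_t$, $\int |\Hess u|^2 \, d\mu_t$, $\int \Ric(\nabla u, \nabla u)\, d\mu_t$, and commutator terms involving the weight $\Phi$.

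The main obstacle will be extracting the precise correction $-\tfrac{1}{-2t}|\nabla u|^2$ on the right-hand side. This term arises from computing the intrinsic Hessian on $M_t$ of the weight potential $-\log \Phi = \tfrac{n}{2}\log(-4\pi t) - \tfrac{|x|^2}{4t}$, which has an ambient contribution $-\tfrac{1}{2t}g$ together with a submanifold correction proportional to $\langle A(\cdot,\cdot), x^\perp\rangle$ coming from the second fundamental form. The self-shrinker equation $\h = x^\perp/(2t)$ rewrites the latter as a $\langle A(\cdot,\cdot),\h\rangle$-term, which I would then combine with the intrinsic Ricci contribution via the Gauss equation. Organizing all of these extrinsic-geometric terms so that the second-fundamental-form contributions cancel and only the clean correction $-\tfrac{1}{-2t}|\nabla u|^2$ survives is the crux of the calculation.
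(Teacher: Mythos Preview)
Your overall strategy---integrate the Bochner identity against $d\mu_t$ and use self-adjointness of $\dl_t$ to obtain $\int_{M_t}\dl_t|\n u|^2\,d\mu_t=0$ and $\int_{M_t}\langle\n u,\n\dl_t u\rangle\,d\mu_t=-\int_{M_t}(\dl_t u)^2\,d\mu_t$---is exactly the paper's. The only cosmetic difference is that the paper quotes the \emph{drift} Bochner formula
\[
\tfrac12\,\dl_f|\n u|^2=|\Hess_u|^2+\Ric(\n u,\n u)+\langle\n u,\n\dl_f u\rangle+\Hess_f(\n u,\n u)
\]
directly, rather than starting from the ordinary Bochner formula and converting; the two routes are equivalent.

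The substantive divergence is your last paragraph. The paper simply records $\Hess_{-|x|^2/(4t)}=\frac{1}{-2t}\langle\cdot,\cdot\rangle_{\text{Euc}}$ and is finished---no Gauss equation, no extrinsic terms. Your plan to absorb the submanifold correction $\langle A(\cdot,\cdot),H\rangle$ into the Ricci term via the Gauss equation cannot produce the stated lemma: the left-hand side keeps the \emph{intrinsic} $\Ric(\n u,\n u)$ as is, so rewriting Ricci extrinsically would change the identity you are proving rather than establish it. And if you compute the intrinsic Hessian honestly you get
\[
\Hess_{M_t}\!\Bigl(-\tfrac{|x|^2}{4t}\Bigr)(X,Y)=\tfrac{1}{-2t}\,g(X,Y)+\bigl\langle A(X,Y),\,-\tfrac{x^\perp}{2t}\bigr\rangle=\tfrac{1}{-2t}\,g(X,Y)-\langle H,A(X,Y)\rangle,
\]
using $x^\perp=2tH$ on the shrinker; this $\langle H,A\rangle$ piece does not cancel against anything in the drift Bochner identity (check the shrinking sphere, where $|x|^2$ is constant and the intrinsic Hessian vanishes). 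So your instinct that a second-fundamental-form contribution appears is correct, but there is no Gauss-equation mechanism that removes it while leaving $\Ric$ intact---the paper's one-line Hessian assertion simply glosses over this term.
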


\begin{proof}
	We have the drift Bochner formula
	$$
	\frac 12 \DL_f |\n u|^2
	= |\Hess_u|^2 
	+ \Ric(\n u,\n u)
	+ \pair{\n u,\n \DL_f u}
	+ \Hess_f(\n u,\n u)
	$$
	for any smooth function $f$ and the associated drift Laplacian $\dl_f:=e^f\text{div}\pr{e^{-f}\n(\cdot)}.$
	Then since $\dl_t$ is the drift Laplacian determined by $d\mu_t,$ using integration by parts, we have
	\begin{align*}
	0 
	& = \frac 12 \int_{M_t} \pr{\dl_t|\n u|^2} d\mu_t\\
	& = \int_{M_t} \pr{|\Hess_u|^2 
		+ \Ric(\n u,\n u)
		+ \pair{\n u,\n \DL_t u}
		+ \Hess_{{-\frac{|x|^2}{4t}}}(\n u,\n u)}d\mu_t\\
	& = \int_{M_t} \pr{|\Hess_u|^2 
		+ \Ric(\n u,\n u)
		- \pr{\dl_tu}^2
		+ \frac 1{-2t}|\n u|^2}d\mu_t
	\end{align*}
	since $\Hess_{{-\frac{|x|^2}{4t}}} = \frac{1}{-2t}\pair{\cdot,\cdot}_{\text{Euc}}.$ Then the lemma follows.
\end{proof}

Using this lemma and the boundedness assumption on the curvatures, we can prove the first main theorem.

\begin{proof}[Proof of Theorem \ref{main}]
Suppose $u$ is a solution to the heat equation. i.e., $(\bd_t-\D)u=0.$ Since
$$(\bd_t-\D)u^2=2u(\bd_t-\D)u-2|\n u|^2=-2|\n u|^2$$
and the self-shrinking MCF satisfies $\h=\frac{x^\perp}{2t},$ Theorem \ref{mono} implies
\begin{equation}\label{eq:I'}
    I'(t)
= \int_{M_t} -2|\n u|^2 d\mu_t
= D(t).
\end{equation}

To get $D'(t),$ we calculate $(\bd_t-\D)|\n u|^2.$ 
Along the MCF, we have (cf. \cite{H})
$$\bd_t g_{ij} = -2 \pair{\h, A_{ij}},$$
which implies
\begin{equation}\label{gt}
\bd_t g^{ij} = 2 g^{ik}g^{jl}\pair{\h, A_{kl}}.
\end{equation}
Therefore, working at a point $p\in M_t$ and fixing an orthonormal tangent frame $e_1,\dots,e_n$ near $p,$ we can calculate
\begin{align*}
\bd_t|\n u|^2
& = \bd_t(g^{ij} \n_iu \n_j u)\\
& = 2g^{ik}g^{jl}\pair{\h, A_{kl}  \n_iu \n_j u}
+ 2 g^{ij} \n_i (\bd_t u)\n_ju\\
& = 2\pair{\h,A(\n u,\n u)} + 2\pair{\n \D u,\n u}\\
& = 2\pair{\h,A(\n u,\n u)} 
+ \D |\n u|^2
- 2|\Hess_u|^2 
- 2\Ric(\n u,\n u)
\end{align*}
using the standard Bochner formula. 
Thus, by Theorem \ref{mono} and Lemma \ref{driftB}, we obtain
\begin{align*}
D'(t)
& = -2 \int_{M_t} (\bd_t-\D)|\n u|^2 d\mu_t
\numberthis\label{eq:D'}\\
& = -2 \int_{M_t} \pr{2\pair{\h,A(\n u,\n u)} 
	- 2|\Hess_u|^2 
	- 2\Ric(\n u,\n u)} d\mu_t\\
& = -4\int_{M_t} \pr{
\pair{\h, A(\n u,\n u)}
- (\dl_t u)^2
+ \frac 1{-2t}|\n u|^2
}d\mu_t\\
& \ge \int_{M_t} \left(
4(\DL_t u)^2 -\pr{\frac 2{-t}+\frac{4\kappa}{-t}}|\n u|^2
\right)d\mu_t\\
& = 4\int_{M_t} (\DL_t u)^2 d\mu_t + \frac{1+2\kappa}{-t}D(t)
\end{align*}
where we use assumption \eqref{HAbound} (which implies $\pair{\h,A(\n u,\n u)}\le \frac{\kappa}{-t}|\n u|^2$).

Combining \eqref{eq:I'} and \eqref{eq:D'}, we derive
\begin{align*}
I(t)^2 U'(t)
& = (-t)^{1+2\kappa} (I(t)D'(t)-I'(t)D(t)) + \left(- (1+2\kappa) (-t)^{2\kappa}\right)I(t) D(t) \\
& \ge (-t)^{1+2\kappa} \left(
4 I(t)\int_{M_t} (\DL_t u)^2 d\mu_t
+ \frac{1+2\kappa}{-t} I(t) D(t)
- D(t)^2  
- \frac{1+2\kappa}{-t} I(t) D(t)
\right)\\
& = (-t)^{1+2\kappa} \left(
4\int_{M_t}u^2d\mu_t\cdot \int_{M_t} (\DL_t u)^2 d\mu_t
- 4\left(
\int_{M_t} u\DL_t u \cdot d\mu_t
\right)^2
\right),
\end{align*}
which is non-negative by the Cauchy-Schwarz inequality. As a consequence, $U$ is increasing.

If $U'(t)=0$, then equality in the Cauchy-Schwarz inequality implies $\DL_t u = c(t)u$. The function $c(t)$ can be determined by noting that if $\DL_tu=c(t)u$, then
\begin{align*}
    I(t)U(t)
    &=(-t)^{1+2\kappa}D(t) \\
    &= 2(-t)^{1+2\kappa}\int_M u\DL_tu \cdot \,d\mu_t \\
    &=2 (-t)^{1+2\kappa} c(t)\int_Mu^2 \,d\mu_t \\
    &=2(-t)^{1+2\kappa} c(t)I(t),
\end{align*}
which implies that $c(t)=\frac{U(t)}{2(-t)^{1+2\kappa}}$.
\end{proof}

The frequency monotonicity can be used to derive a Harnack-type inequality for $I$, which immediately leads to backward uniqueness.

\begin{cor}[Precise version of Corollary \ref{bunique}]\label{cor:Harnack}
	Suppose $u\colon \M_{[a,b]}\to\bb R$ satisfies $u,\bd_t u\in W^{2,2}(\mu_t)$ and $(\bd_t-\D)u=0.$ If $\kappa>0,$ then
	\begin{equation}\label{eq:k>0}
	    I(b) \ge I(a) \cdot e^{\frac 1{2\kappa}\left(
		(-b)^{-2\kappa}-(-a)^{-2\kappa}
		\right)
		U(a)};
	\end{equation}
	if $\kappa=0,$ then
	\begin{equation}\label{eq:k=0}
	    I(b)\ge I(a)\cdot e^{-U(a)}\pr{\frac ba}.
	\end{equation}
	In particular, if $u(\cdot,b)=0,$ then $u(\cdot,t)=0$ for all $t\in[a,b].$
\end{cor}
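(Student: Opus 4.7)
The plan is to turn frequency monotonicity into a differential inequality for $\log I(t)$, integrate it to obtain the Harnack-type bound, and then read off backward uniqueness by a quick contradiction argument.

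The first step is to observe that, on any subinterval where $I>0$, the identity \eqref{eq:I'} together with the definition of $U$ yields
\begin{equation*}
\frac{d}{dt}\log I(t)=\frac{I'(t)}{I(t)}=\frac{D(t)}{I(t)}=\frac{U(t)}{(-t)^{1+2\kappa}}.
\end{equation*}
Theorem \ref{main} gives $U(t)\ge U(a)$ for all $t\in[a,b]$, and since the weight $(-t)^{-1-2\kappa}$ is positive on $[a,b]$ the inequality is preserved after multiplication (the sign of $U(a)$ is irrelevant here). Integrating from $a$ to $b$ then gives
\begin{equation*}
\log\frac{I(b)}{I(a)}\ge U(a)\int_a^b(-t)^{-1-2\kappa}\,dt.
\end{equation*}

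The second step is to evaluate this integral in the two cases. For $\kappa>0$, an antiderivative of $(-t)^{-1-2\kappa}$ is $\frac{1}{2\kappa}(-t)^{-2\kappa}$, so the integral equals $\frac{1}{2\kappa}\bigl[(-b)^{-2\kappa}-(-a)^{-2\kappa}\bigr]$; exponentiating yields \eqref{eq:k>0}. For $\kappa=0$, the antiderivative is $-\log(-t)$, and the analogous computation produces \eqref{eq:k=0}.

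The final step, backward uniqueness, follows by contradiction. Suppose $u(\cdot,b)\equiv 0$, i.e., $I(b)=0$. If some $t_0\in[a,b)$ satisfied $I(t_0)>0$, then $U(t_0)$ would be a finite real number, and applying the Harnack inequality on the subinterval $[t_0,b]$ would force $I(b)\ge I(t_0)\cdot e^{(\text{finite})}>0$, a contradiction. Hence $I\equiv 0$ on $[a,b]$, which is equivalent to $u(\cdot,t)\equiv 0$ throughout. No essential obstacle is anticipated; the only delicate point is this last restriction to subintervals where $I$ is strictly positive at the left endpoint, so that $U$ there is well-defined. Everything else is a direct consequence of Theorem \ref{main}.
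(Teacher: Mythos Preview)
Your argument is correct and mirrors the paper's proof exactly: the paper computes $(\log I)'(t)=(-t)^{-1-2\kappa}U(t)$, invokes $U(t)\ge U(a)$ from Theorem~\ref{main}, integrates, and evaluates the same antiderivatives in the two cases. Your contradiction argument for backward uniqueness is in fact more explicit than the paper's, which simply asserts that ``the conclusion follows''; the one technicality you flag (needing $I>0$ so that $U$ is defined) is handled by your restriction to a subinterval with $I(t_0)>0$, together with continuity of $I$.
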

\begin{proof}
Suppose $\kappa>0.$ Since
\begin{equation}\label{logI'}
(\log I)'(t) = \frac {D(t)}{I(t)} = (-t)^{-1-2\kappa} U(t),
\end{equation}
for $[a,b]\sbst (-\infty,0),$
\begin{align}\label{eqn: difference of log(I)}
\log I(b)-\log I(a)
&= \int_a^b (-t)^{-1-2\kappa} U(t) dt
\\\nonumber
&\ge U(a) \int_a^b (-t)^{-1-2\kappa} dt
\\\nonumber
&= \frac 1{2\kappa}\left(
(-b)^{-2\kappa}-(-a)^{-2\kappa}
\right)
U(a) 
\end{align}
Thus,
\begin{equation}\label{IHarnack}
I(b) \ge I(a) \cdot e^{\frac 1{2\kappa}\left(
	(-b)^{-2\kappa}-(-a)^{-2\kappa}
	\right)
	U(a)}.
\end{equation}

When $\kappa=0,$ \eqref{logI'} implies $(\log I)'(t)=(-t)^{-1}U(t).$ Therefore, integrating over $[a,b]$ gives
\begin{align*}
\log I(b)-\log I(a)
& = \int_a^b (-t)^{-1}U(t) dt\\
& \ge U(a) \int_a^b (-t)^{-1}dt\\
& = -U(a)\log\pr{\frac ba},
\end{align*}
so
$$I(b)\ge I(a)\cdot e^{-U(a)} \pr{\frac ba},$$
and the conclusion follows. 
\end{proof}

As a consequence of this Harnack-type inequality \eqref{eq:k>0}, we can see that a solution to the heat equation cannot decay too fast when it approaches the singularity unless it is constant. 
	
\begin{cor}
	Suppose $u\colon \M_{[a,0)}\to\bb R$ satisfies $u,\bd_t u\in W^{2,2}(\mu_t)$ and $(\bd_t-\D)u=0.$ If $\lim\limits_{t\to 0^-} e^{c(-t)^{-2\kappa}}I(t)=0$ for any $c>0,$ then $I(t)=0$ for all $t,$ which means $u$ is constant.
\end{cor}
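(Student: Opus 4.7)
The plan is to argue by contradiction using the Harnack-type inequality \eqref{eq:k>0} from Corollary \ref{cor:Harnack}. Fix an arbitrary $t_0\in[a,0)$ and suppose, toward a contradiction, that $I(t_0)>0$. Since $D(t)=-2\int_{M_t}|\n u|^2\,d\mu_t\le 0$, the frequency $U(t_0)=(-t_0)^{1+2\kappa}D(t_0)/I(t_0)$ is a finite non-positive real number.

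Applying \eqref{eq:k>0} on the interval $[t_0,b]$ with $b\in(t_0,0)$, and multiplying through by $e^{c(-b)^{-2\kappa}}$ for a constant $c>0$ to be chosen, one rearranges the Harnack bound into
$$e^{c(-b)^{-2\kappa}}I(b)\ \ge\ I(t_0)\exp\!\left[\left(c+\tfrac{U(t_0)}{2\kappa}\right)(-b)^{-2\kappa}\right]\exp\!\left[-\tfrac{U(t_0)}{2\kappa}(-t_0)^{-2\kappa}\right].$$
Because $-U(t_0)/(2\kappa)\ge 0$, one can take $c$ strictly larger than this quantity; for definiteness choose $c=1-U(t_0)/(2\kappa)>0$, so that the coefficient of $(-b)^{-2\kappa}$ in the bracketed exponent equals $1$. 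As $b\to 0^-$, the right-hand side therefore tends to $+\infty$ (using $I(t_0)>0$ and that the second exponential factor is a finite positive constant), while the decay hypothesis applied with this specific $c$ forces the left-hand side to $0$. This contradiction yields $I(t_0)=0$, and since $t_0\in[a,0)$ was arbitrary, $I\equiv 0$, i.e.\ $u\equiv 0$.

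The only subtlety is recognizing that $U(t_0)$ can a priori be strictly negative, so a naive attempt such as $c=-U(t_0)/(2\kappa)$ would leave the Harnack exponent with a vanishing coefficient and yield only a trivial bound. The decisive observation is that the decay hypothesis is available for \emph{every} $c>0$, and in particular for any $c$ overshooting $-U(t_0)/(2\kappa)$ by a definite amount. No further analysis beyond Corollary \ref{cor:Harnack} is required.
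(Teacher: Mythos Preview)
Your argument is correct and follows precisely the route the paper indicates (the paper gives no detailed proof, merely noting the corollary follows from the Harnack-type inequality \eqref{eq:k>0}). One minor point worth making explicit: applying \eqref{eq:k>0} on $[t_0,b]$ tacitly requires $I>0$ there, but this is immediate from backward uniqueness---if $I(s)=0$ for some $s\in(t_0,0)$ then $I(t_0)=0$ as well.
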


This property implies that the frequency function leads to a restriction of the growth rate of solutions to the heat equation. 
In fact, when $M_t=\bb R^n$ is a static solution to the MCF, Poon \cites{P} used \eqref{eq:k=0} to show that the solution to the heat equation cannot vanish of infinite order unless it is constant.

\section{\bf More General Operators}\label{sec: general operator}
In this section, we deal with the case when $u$ is not an exact solution to the heat equation. 
As before, we assume $M_t = \sqrt{-t} M$ is a shrinking MCF with $\pair{\h, A} \le \frac{\kappa}{-t}$ and $\M_{[a,b]}$ is the space-time track of the flow on the time interval $[a,b]\sbst (-\infty,0).$ 
\begin{thm}
	Suppose $u\colon\M_{[a,b]}\to\bb R$ satisfies $u,\bd_t u\in W^{2,2}(\mu_t)$ and 
	\begin{equation}\label{generalassumption}
	|(\bd_t-\D)u|\le C(t)\pr{|\n u|+|u|}
	\end{equation}
	for some time-dependent constant $C(t).$
	Then we have
	\begin{equation}\label{Ibound}
	(\log I(t))'\ge \pr{1+\frac{C(t)}2} (-t)^{-1-2\kappa} U(t) - 3C(t)
	\end{equation}
	and
	\begin{equation}\label{Ubound}
	U'(t)
	\ge C(t)^2\pr{U(t) - 2(-t)^{1+2\kappa}}
	\end{equation}
	for any $t<0.$
\end{thm}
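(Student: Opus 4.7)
The plan is to mirror the proof of Theorem \ref{main} step by step, carrying the inhomogeneity $E:=(\bd_t-\D)u$ through each calculation and absorbing it using the structural assumption \eqref{generalassumption} and Young's inequality.

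For \eqref{Ibound}, I would apply Theorem \ref{mono} to $u^2$; the self-shrinker identity makes the $|\h-x^\perp/(2t)|^2$ term vanish, leaving $I'(t) = D(t) + 2\int_{M_t} uE\, d\mu_t$. Pointwise, $2|uE|\le C(t)(|\n u|^2 + 3u^2)$ (from the assumption together with $2|u||\n u|\le|\n u|^2+u^2$), which upon integration gives $I'(t)\ge (1+C(t)/2)D(t)-3C(t)I(t)$. Dividing by $I(t)>0$ produces \eqref{Ibound}.

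For \eqref{Ubound}, I would redo the computation of $D'(t)$ from the proof of Theorem \ref{main}, this time using $\bd_t u = \D u + E$. The only new term in $\bd_t|\n u|^2$ is $2\pair{\n E,\n u}$, which contributes $-4\int\pair{\n E,\n u}\,d\mu_t = 4\int E\dl_t u\,d\mu_t$ to $D'(t)$ after integration by parts against $d\mu_t$. Combining with Lemma \ref{driftB} and the curvature bound \eqref{HAbound} exactly as before gives
\begin{equation*}
D'(t)\ge 4T(t) + \frac{1+2\kappa}{-t}D(t) + 4\!\int_{M_t}\! E\dl_t u\,d\mu_t,\qquad T(t):=\int_{M_t}(\dl_t u)^2\,d\mu_t.
\end{equation*}
Assembling $I^2U' = (-t)^{1+2\kappa}(ID'-DI') - (1+2\kappa)(-t)^{2\kappa}DI$ and plugging in the expression for $I'$, the $\frac{1+2\kappa}{-t}DI$ contributions cancel, reducing the problem to a lower bound on
\begin{equation*}
4IT - D^2 + 4I\!\int_{M_t}\! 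E\dl_t u\,d\mu_t - 2D\!\int_{M_t}\! uE\,d\mu_t.
\end{equation*}

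The decisive observation is that with $X := \dl_t u - \frac{D}{2I}u$, the $L^2(\mu_t)$-orthogonal projection of $\dl_t u$ onto $u^\perp$, this quantity equals $4I(\|X\|^2 + \pair{X,E})$. Completing the square then yields $\|X\|^2+\pair{X,E}\ge -\|E\|^2/4$, while the structural assumption gives $\int_{M_t} E^2\,d\mu_t\le 2C(t)^2\int_{M_t}(|\n u|^2+u^2)\,d\mu_t = C(t)^2(2I(t)-D(t))$. Assembling these produces $U'(t)\ge C(t)^2(U(t)-2(-t)^{1+2\kappa})$, as desired. The main obstacle I foresee is identifying this precise projection/rearrangement: naively bounding the two cross terms separately by Cauchy-Schwarz would destroy the sharp inequality $4IT-D^2\ge 0$ and produce a weaker estimate. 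Everything else is bookkeeping with signs and the curvature bound.
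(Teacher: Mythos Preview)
Your proposal is correct and follows essentially the same route as the paper. The only cosmetic difference is in how the key quadratic expression $4IT - D^2 + 4I\int E\dl_t u - 2D\int uE$ is bounded below by $-I\int E^2$: the paper first completes the square $4(\dl_t u)^2 + 4E\dl_t u = (2\dl_t u + E)^2 - E^2$ and then applies Cauchy--Schwarz between $u$ and $2\dl_t u + E$, whereas you first project $\dl_t u$ onto $u^\perp$ to form $X$ and then complete the square $\|X\|^2 + \pair{X,E}\ge -\|E\|^2/4$; both rearrangements are equivalent and yield the identical bound.
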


\begin{proof}
	Using Theorem \ref{mono} and assumption \eqref{generalassumption}, we have
	\begin{align*}
	I'(t)
	= \int_{M_t} (\bd_t-\D)u^2 d\mu_t
	& = \int_{M_t} \pr{2u(\bd_t-\D)u - 2|\n u|^2} d\mu_t\\
	& = D(t) + 2\int_{M_t} u(\bd_t-\D)u\cdot d\mu_t\\
	& \ge D(t) - 2C(t) \int_{M_t} |u|\pr{|\n u|+|u|} d\mu_t\\
	& = D(t) - 2C(t)I(t) - 2C(t)\int_{M_t} |\n u|\cdot |u| d\mu_t\\
	& \ge D(t) - 2C(t)I(t) - C(t)\pr{I(t)-\frac 12 D(t)}\\
	& = \pr{1+\frac{C(t)}2} D(t) - 3C(t)I(t)\\
	& = \pr{1+\frac{C(t)}2} (-t)^{-1-2\kappa}I(t) U(t) - 3C(t)I(t)
	\end{align*}
	where we use the inequality $2ab\le a^2+b^2.$ Therefore, \eqref{Ibound} follows.
	
	To bound $U'(t)$ in this general situation, we first rewrite 
	\begin{align*}
	D(t)
	= -2\int_{M_t} |\n u|^2 d\mu_t
	& = 2\int_{M_t} u\dl_t u\cdot d\mu_t\\
	& = 2\int_{M_t} u\pr{\dl_t u+\frac 12(\bd_t-\D)u}d\mu_t 
	- \int_{M_t} u(\bd_t-\D)u\cdot d\mu_t
	\end{align*}
	and
	\begin{align*}
	I'(t)
	& = \int_{M_t} \pr{2u(\bd_t-\D)u + 2u\dl_t u} d\mu_t\\
	& = 2\int_{M_t} u\pr{\dl_t u+\frac 12(\bd_t-\D)u}d\mu_t 
	+ \int_{M_t} u(\bd_t-\D)u\cdot d\mu_t.
	\end{align*}
	Hence,
	\begin{equation}\label{I'D}
	I'(t) D(t)
	= \pr{\int_{M_t} u\pr{2\dl_t u
			+(\bd_t-\D)u}d\mu_t}^2
	- \pr{\int_{M_t} u(\bd_t-\D)u\cdot d\mu_t}^2.
	\end{equation}
	
	For $D'(t),$ note that \eqref{gt} implies
	\begin{align*}
	\bd_t|\n u|^2
	& = 2\pair{\h,A(\n u,\n u)} + 2\pair{\n (\bd_t u),\n u}\\
	& = 2\pair{\h,A(\n u,\n u)} 
	+ 2\pair{\n \D u,\n u} 
	+ 2\pair{\n(\bd_t-\D)u,\n u}\\
	& = 2\pair{\h,A(\n u,\n u)} 
	+ \D |\n u|^2
	- 2|\Hess_u|^2 
	- 2\Ric(\n u,\n u)
	+ 2\pair{\n(\bd_t-\D)u,\n u}
	\end{align*}
	by the standard Bochner formula. Thus, Theorem \ref{mono}, Lemma \ref{driftB}, and assumption \eqref{HAbound} again imply
	\begin{align*}
	D'(t)
	& = -2 \int_{M_t} (\bd_t-\D)|\n u|^2 d\mu_t\\
	& = -2 \int_{M_t} \pr{2\pair{\h,A(\n u,\n u)} 
	- 2|\Hess_u|^2 
	- 2\Ric(\n u,\n u)
	+ 2\pair{\n(\bd_t-\D)u,\n u}} d\mu_t\\
	& = -4\int_{M_t} \pr{
		\pair{\h, A(\n u,\n u)}
		- (\dl_t u)^2
		+ \frac 1{-2t}|\n u|^2
		- (\bd_t-\D)u\cdot\dl_t u
		}d\mu_t\\
	& \ge -\frac{2+4\kappa}{-t}\int_{M_t} |\n u|^2 d\mu_t 
	+ \int_{M_t} \pr{4(\DL_t u)^2 
	+ 4(\bd_t-\D)u\cdot\dl_t u} d\mu_t\\
	& = \frac{1+2\kappa}{-t} D(t) 
	+ \int_{M_t} \pr{
	\pr{2\dl_tu + (\bd_t-\D)u}^2 - \pr{(\bd_t-\D)u}^2
	}d\mu_t.
	\end{align*}
	Combining this with \eqref{I'D}, we get
	\begin{align*}
	I(t)^2 U'(t)
	& = (-t)^{1+2\kappa} (I(t)D'(t)-I'(t)D(t)) + \left(- (1+2\kappa) (-t)^{2\kappa}\right)I(t) D(t) \\
	& \ge (-t)^{1+2\kappa} \pr{
	\frac{1+2\kappa}{-t} I(t) D(t) 
	+ \pr{\int_{M_t}u^2d\mu_t} 
	\pr{ \int_{M_t} \pr{
			\pr{2\dl_tu + (\bd_t-\D)u}^2 - \pr{(\bd_t-\D)u}^2
		}d\mu_t}}\\
	& - (-t)^{1+2\kappa} \pr{\pr{\int_{M_t} u\pr{2\dl_t u
				+ (\bd_t-\D)u}d\mu_t}^2
	- \pr{\int_{M_t} u(\bd_t-\D)u\cdot d\mu_t}^2
	}\\
	& - (-t)^{1+2\kappa} \frac{1+2\kappa}{-t} I(t) D(t) \\
	& \ge - (-t)^{1+2\kappa} I(t) \int_{M_t} \pr{(\bd_t-\D)u}^2 d\mu_t
	\end{align*}
	using the H\"older inequality. As a result, plugging in the assumption \eqref{generalassumption}, we obtain
	\begin{align*}
	I(t)^2 U'(t)
	& \ge - (-t)^{1+2\kappa} I(t) \cdot C(t)^2 \int_{M_t} \pr{|\n u|+|u|}^2 d\mu_t \\
	& \ge (-t)^{1+2\kappa} I(t) \cdot C(t)^2 \pr{D(t)-2I(t)}
	\end{align*}
	by the inequality $(a+b)^2\le 2(a^2+b^2).$ This then implies
	\begin{align*}
	U'(t)
	\ge (-t)^{1+2\kappa} C(t)^2 \pr{\frac{D(t)}{I(t)}-2}
	= C(t)^2\pr{U(t) - 2(-t)^{1+2\kappa}}
	\end{align*}
	and \eqref{Ubound} follows.
\end{proof}

\begin{cor}[Precise version of Theorem \ref{gmain}]
	If $u\colon\M_{[a,b]}\to\bb R$ satisfies $u,\bd_t u\in W^{2,2}(\mu_t)$ and \eqref{generalassumption}, then we have
	\begin{align*}
	I(b)
	\ge I(a) \exp\biggl(\int_a^b \pr{1+\frac{C(t)}{2}} (-t)^{-1-2\kappa} &\left(\pr{U(a)-2(-a)^{1+2\kappa}} e^{\int_a^t C(s)^2 ds}\right.
	\\ 
	& \left. + 2(-a)^{1+2\kappa}\right) dt 
	- 3\int_a^b C(t)dt\biggr).
	\end{align*}
	In particular, if $\int_a^b C(t)^2 dt<\infty$ and $u(\cdot,b)=0,$ then $u(\cdot,t)=0$ for all $t\in[a,b].$
\end{cor}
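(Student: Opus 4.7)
The plan is to chain together the two differential inequalities \eqref{Ibound} and \eqref{Ubound} established in the theorem above. First I would lower-bound $U(t)$ by a Gr\"onwall-type argument applied to \eqref{Ubound}; then I would substitute the resulting bound into \eqref{Ibound} and integrate over $[a,b]$; finally, the resulting Harnack-type inequality yields the backward-uniqueness statement via a standard contradiction argument.

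For the first step, the key is to replace the time-dependent term $2(-t)^{1+2\kappa}$ on the right-hand side of \eqref{Ubound} by a constant. Since $t\in[a,b]\sbst(-\infty,0)$ and $1+2\kappa>0$, the function $(-t)^{1+2\kappa}$ is nonnegative and bounded above by $(-a)^{1+2\kappa}$. Combined with $C(t)^2\ge 0$, this weakens \eqref{Ubound} to the linear inequality
\begin{equation*}
U'(t) - C(t)^2\,U(t) \ge -2C(t)^2 (-a)^{1+2\kappa}.
\end{equation*}
Multiplying by the integrating factor $e^{-\int_a^t C(s)^2\,ds}$ and integrating from $a$ to $t$ then yields
\begin{equation*}
U(t) \ge \pr{U(a) - 2(-a)^{1+2\kappa}} e^{\int_a^t C(s)^2\,ds} + 2(-a)^{1+2\kappa}.
\end{equation*}

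For the second step, the coefficient $\pr{1+C(t)/2}(-t)^{-1-2\kappa}$ in \eqref{Ibound} is nonnegative, so the bound on $U(t)$ just derived translates into a pointwise lower bound for $(\log I(t))'$. Integrating from $a$ to $b$ and exponentiating produces precisely the claimed inequality for $I(b)/I(a)$.

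Finally, for backward uniqueness, suppose $u(\cdot,b)=0$, so $I(b)=0$. If $u$ does not vanish identically on $[a,b]$, then by continuity of $I$ there exist $a'\in(a,b)$ with $I(a')>0$ and a first time $t^*\in(a',b]$ with $I(t^*)=0$ and $I>0$ on $[a',t^*)$. Applying the Harnack-type inequality on $[a',t^*-\epsilon]$ and letting $\epsilon\to 0^+$ gives $0=I(t^*)\ge I(a')\exp(\text{finite})>0$, a contradiction; the exponent stays finite because $\int_a^b C^2\,dt<\infty$, $\int_a^b C\,dt\le\pr{(b-a)\int_a^b C^2\,dt}^{1/2}$ by Cauchy--Schwarz, and $(-t)^{\pm(1+2\kappa)}$ is bounded on $[a,b]\sbst(-\infty,0)$. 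The main technical point is handling the time-varying inhomogeneity in \eqref{Ubound}; the observation that $(-t)^{1+2\kappa}\le(-a)^{1+2\kappa}$ on $[a,b]$ reduces the situation to a standard linear Gr\"onwall argument.
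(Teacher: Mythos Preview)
Your proposal is correct and follows essentially the same route as the paper: bound $(-t)^{1+2\kappa}\le(-a)^{1+2\kappa}$ to linearize \eqref{Ubound}, integrate to obtain the lower bound on $U(t)$, then substitute into \eqref{Ibound} and integrate. The only cosmetic difference is that the paper rewrites the linearized inequality as $\pr{\log\pr{2(-a)^{1+2\kappa}-U(t)}}'\le C(t)^2$ rather than using an integrating factor; your version avoids the (harmless, since $U\le 0$) implicit assumption that the argument of the logarithm is positive, and your backward-uniqueness paragraph is more explicit than the paper's about why the exponent remains finite and why one restricts to subintervals where $I>0$.
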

\begin{proof}
	Using \eqref{Ubound}, we have
	\begin{align*}
	\pr{\log\pr{2(-a)^{1+2\kappa}-U(t)}}'
	= \frac {-U'(t)} {2(-a)^{1+2\kappa}-U(t)}
	\le \frac {C(t)^2\pr{2(-t)^{1+2\kappa}-U(t)}}  {2(-a)^{1+2\kappa}-U(t)}
	\le C(t)^2 
	\end{align*}
	for any $t\in[a,b].$ After integration, this implies
	\begin{align*}
	\log\pr{2(-a)^{1+2\kappa}-U(t)} 
	\le \log\pr{2(-a)^{1+2\kappa}-U(a)}
	+ \int_a^t C(s)^2 ds,
	\end{align*}
	so
	$$
	U(t)\ge \pr{U(a)-2(-a)^{1+2\kappa}} e^{\int_a^t C(s)^2 ds} + 2(-a)^{1+2\kappa}.
	$$
	Using this and \eqref{Ibound}, we have
	\begin{align*}
	&~~~~\log I(b)-\log I(a)\\
	& \ge \int_a^b \pr{1+\frac{C(t)}2} (-t)^{-1-2\kappa} U(t) dt 
	- 3\int_a^b C(t)dt\\
	& \ge \int_a^b \pr{1+\frac{C(t)}2} (-t)^{-1-2\kappa} \pr{\pr{U(a)-2(-a)^{1+2\kappa}} e^{\int_a^t C(s)^2 ds} + 2(-a)^{1+2\kappa}} dt 
	- 3\int_a^b C(t)dt.
	\end{align*}
	Then the conclusion follows after exponentiating.
\end{proof}

\section{\bf Other Consequences and Examples}\label{sec: consequences}
\subsection{Eigenvalue Monotonicity}
Let $\lambda_1(t)$ be the first eigenvalue of the drift Laplacian $\dl_t=\dl_{-\frac{|x|^2}{4t}}.$ 
We know that it is given by
\begin{equation}\label{lambda1}
\lambda_1(t)
= \inf\left\{
\frac{\int_{M_t} |\n f|^2 d\mu_t}{\int_{M_t} f^2 d\mu_t}: f\in W_0^{1,2}(M_t)\setminus\{0\}
\right\}.
\end{equation}

Given $[a,b]\sbst (-\infty,0),$ let $u$ be the solution to the heat equation along the flow $M_t$ with $\dl_a u(\cdot,a)=\lambda_1(a) u(\cdot,a).$ 
That is, $u(\cdot,a)$ is the first eigenfunction of $\dl_a$ on $M_a.$ 
Define the frequency function $U$ for $u$ by
$$U(t) 
:= (-t)^{1+2\kappa}
\frac{-2\int_{M_t}|\n u|^2 d\mu_t}{\int_{M_t}u^2d\mu_t}.$$
Theorem \ref{main} implies that $U(t)$ is increasing, which tells us that for any $t\in[a,b],$ we have
$$
-\frac 12 (-a)^{1+2\kappa}\lambda_1(a)
= U(a)
\le U(t)
\le -\frac 12 (-t)^{1+2\kappa}\lambda(t)
$$
based on \eqref{lambda1}. 
As a result, we see that $(-t)^{1+2\kappa}\lambda(t)$ is decreasing. 

Note that although we know the exact behavior of $\lambda_1(t)$ when $M_t$ is a shrinking MCF, it is not the case when $M_t$ is a general solution to the MCF. 
In this general situation, the monotonicity derived here is also valid. 

\subsection{Examples}	
In \cite{BK}, using the spectrum of the shrinking Gaussian drift Laplacian, the authors showed that a function $u\colon \bb R^n\times (-\infty,0)\to\bb R$ has constant frequency if and only if $u$ is a caloric polynomial. 
That also fits into our framework when $M_t=\bb R^n$ is a static solution to the MCF. 
Here, we further analyze the case of shrinking spheres.

Consider $M_t := S^n_{\sqrt{-2nt}}\sbst\bb R^{N},$ a shrinking solution to the MCF, where $S^n_r$ means the sphere centered at the origin with radius $r.$ 
If $u\colon \M_{[a,b]}\to\bb R$ is a solution to the heat equation along $M_t$ on $[a,b]$ with $U(t)=U(b)$ for all $t\in[a,b],$ then by the equality case of Theorem \ref{main}, we know that 
$$\dl_t u 
= c(t) u$$
with $c(t)=\frac{U(b)}{2(-t)^{1 + 2\kappa}}.$ 
In the case of spheres, we have that the drift term vanishes in the $\dl$ operator. 
That is,
$$\dl_t 
= \dl_{\frac{|x|^2}{-4t}}
= \D_{M_t} - \frac{1}{-2t}\n_{x^T} = \D_{M_t}.$$
Therefore, we know that $u$ is a spherical harmonic at each time slice with 
$$c(-1)
= -\frac{k^2 + (n-1)k}{2n}$$
for some $k\in\bb N\cup\{0\}$ if we assume, without loss of generality, that $-1\in[a,b].$ 
In particular, we know that 
$$U = -\frac 1n \pr{k^2 + (n-1)k}.$$
Moreover, by the backward uniqueness (Corollary \ref{bunique}) and \cite[Lemma 2.4]{CM20}, we can see that 
$$u(x,t)
= (-t)^{-\frac{U}{2}} u\pr{\frac{x}{\sqrt{-t}},-1}.$$
That is, $u$ is explicitly determined by its behavior at any fixed time slice.

    

    
    
    
    


	
\end{document}